\renewcommand{\@algocf@capt@plain}{above}
\newtheorem{theorem}{Theorem}{}
{}
{}
\newtheorem{proposition}[theorem]{Proposition}{}
{}
\newtheorem{definition}[theorem]{Definition}{}
\newtheoremstyle{example}{\topsep}{\topsep}%
 {}
 {}
 {\bfseries}
 {}
 {\newline}
 {\thmname{#1}\thmnumber{ #2}\thmnote{ #3}}
\theoremstyle{example}
\newtheorem{example}{Example}{}
\title{Mixed-Integer Path-Stable Optimisation, with Applications in Model-Predictive Control of Water Systems}
\author{
Jorn Baayen\footnote{KISTERS Nederland B.V., St Jacobsstraat 123-135, 3511 BP Utrecht, The Netherlands. \newline E-mail: \url{jorn.baayen@kisters-bv.nl}.}\: and Jakub Marecek\footnote{IBM Research, Technology Campus Mulhuddart, Dublin 15, Ireland.\newline E-mail: \url{jakub.marecek@ibm.com}.}}
\date{\today}
\begin{document}

\maketitle

\begin{abstract}
Many systems exhibit a mixture of continuous and discrete dynamics.
We consider a family of mixed-integer non-convex non-linear optimization problems obtained in discretisations of optimal control of such systems. For this family, a branch-and-bound algorithm solves the discretised problem to global optimality. 

As an example, we consider water systems, where variations in flow and variations in water levels are continuous, while decisions related to fixed-speed pumps and whether gates that may be opened and closed are discrete.
We show that the related optimal-control problems come from the family we introduce -- and implement deterministic solvers with global convergence guarantees. 
\end{abstract}

\section{Introduction}

Many problems exhibit a mixture of continuous and discrete dynamics. Consider, for example, water systems: continuous variation of natural flow and continuous variations in channel water levels are in sharp contrast to discrete decision related to fixed-speed pumps and whether gates that may be opened and closed.
Optimization problems over such a mixture of continuous and discrete dynamics are an important challenge across optimal control, mathematical optimisation, and many engineering disciplines. 


A natural approach to optimise over continuous and discrete dynamics utilises finite discretisation of time and so-called mixed-integer non-linear optimisation solvers, 
which optimise over a finite set of discrete and continuous decision variables. 
Whereas many problems with continuous decision variables may be solved to global optimality efficiently, 
best known general-purpose algorithms for discrete problems or problems combining continuous and discrete decisions typically exhibit exponential worst-case complexity.

There are many approaches considered in the literature \citep[cf.]{floudas1995nonlinear,belotti2013mixed}. Heuristic approaches include linearizing the continuous dynamics and post-processing the results of a continuous optimization run, e.g., by rounding results to their nearest integer values.
Deterministic approaches with global convergence guarantees typically consider a sequence of progressively improving outer- or inner-approximations \citep[e.g.]{beale1976global,duran1986outer,floudas1995nonlinear,belotti2013mixed}.
The best-known such deterministic global optimization codes \citep{kronqvist2019review}
include Baron \citep{sahinidis1996baron},  Couenne \citep{belotti2009branching}, Antigone \citep{misener2014antigone}, and scip \citep{gleixner2017scip}.
Such general-purpose codes not only 
exhibit exponential worst-case complexity, but also
exhibit limited scalability in practice.

\vskip 3mm

In this paper, we present a class of mixed-integer non-linear optimization (MINLO) problems,
which we call path-stable, extending the notion of path stability introduced for continuous systems in \cite{Baayen2019}.
For such instances, a straightforward decomposition yields global optima of the MINLO. 
In the decomposition, the integer part of the optimization problem is solved by means of a branch-and-bound or branch-and-cut algorithm, whereas the continuous part is solved using a homotopy-continuation method \citep{bates2013numerically,allgower2012numerical}. 


Problems of this type are of great practical relevance when it comes to 
  controlling dams and weirs along the river, so as to reduce the amplitude of a flood wave by the time it reaches large urban centers. 
In some cases, one could apply non-linear model predictive control (MPC) of the non-linear hydraulic models \citep{ackermann2000real,Schwanenberg2015,Baayen2019}, but in in many other cases, 
the weirs consist of a small integer number of gates that are put in place or removed individually. 
One hence cannot control the flow at such a weir continuously.
One can, however, obtain a mixed-integer path-stable formulation.
Considering that extreme weather events appear  with an increasing frequency, causing more frequent and more intense flooding, improving the operations of water systems to reduce flood damage and save lives seems increasingly necessary. 

\section{Background and Related Work}



Before we present our approach, we should like to explain the related work we build upon. In Section \ref{sec:cont-nonlin} we provide a brief introduction to the classical homotopy-continuation method  for systems of non-linear equations.  In Section \ref{sec:parametric}, we show how the continuation method can be applied to find global optima of certain non-convex optimization problems.  
In Section \ref{sec:water}, we present related work in artificial intelligence and water engineering. 

\subsection{Solving Non-linear Equations}\label{sec:cont-nonlin}

Let us have a system of non-linear equations.
Let $F: \mathbb{R}^n \to \mathbb{R}^n$ denote its  residual function, such that we have:
\[
F(x)=0.
\]
Finding a solution $x^*$ such that $F(x^*)=0$ is undecidable over the reals. If an initial estimate $x_0$ is sufficiently close to $x^*$, the Newton-Raphson method converges to $x^*$. For other initial points, the Newton-Raphson method may diverge.

Instead of applying the Newton-Raphson method directly, one can approximate the residual function $F$ with a suitable function $\tilde{F}$, for which a solution $\tilde x^*$:
\[
\tilde{F}(\tilde x^*) = 0.
\]
is known. For any homotopy parameter $\theta \in [0, 1]$, we can use:
\[
G(x,\theta):=(1-\theta)\tilde{F} + \theta F
\]
as a deformation of $\tilde{F}$ into $F$.
Starting with $\tilde x^*$  such that $\tilde{F}(\tilde x^*)=G(\tilde x^*,0)=0$, we can increase $\theta$ progressively and solve $G(x,\theta)=0$ for $x$ starting from $\tilde x^*$. If the step in increasing $\theta$ is sufficiently small, Newton-Raphson method on  $G(\cdot,\theta)=0$ will converge. Eventually, we wish to arrive at a solution $x^*$ such that $F(x^*)=G(x^*,1)=0$.  

This process is known as tracing a path $\theta \mapsto x(\theta)$. By the implicit function theorem, this path exists locally and is unique as long as $\partial G / \partial x$ is non-singular.  Otherwise, the path may (1) turn back on itself, (2) end, or (3) bifurcate into multiple paths.
See \cite{bates2013numerically,allgower2012numerical} for further details.
In the following, we will look for conditions under which all points are non-singular.


\subsection{Parametric Programming}\label{sec:parametric}

One can generalise the path-tracing to parametric optimization problems of the form:
\begin{align}
 \min_x f(x,\theta) \quad & \nonumber\\
 \text{subject to} \quad & c(x,\theta) = 0, \tag{$\mathcal P^\theta$}\label{def:parametric-optimization-problem}\\
& x \geq 0 \nonumber;
\end{align}
where $x$ is the optimization variable, $\theta \in [0,1]$ is again a homotopy parameter, $f(x,\theta)$ is the objective function, and $c(x,\theta) : \mathbb R^{n} \times [0,1] \to \mathbb R^{\ell}$ denotes the set of constraints. 
Throughout, we assume that the functions $f$ and $c$ are at least twice differentiable.

Parametric optimization problems of this type have been studied extensively \citep[e.g.]{tikhonov1952systems,PT87,JW90,guddat1990parametric,massicot2019line}.  
One can consider an interior-point method \citep{forsgren2002interior} and solve a sequence of barrier problems:
\begin{gather}
\min_x f(x, \theta) - \mu \sum_{i=1}^n \ln (x_i)  \nonumber\\
\text{subject to} 
\quad c(x,\theta) = 0, \tag{$\mathcal P_\mu^\theta$}\label{def:parametric-barrier-optimization-problem}\\
\hspace*{15mm}
x \in \mathbb{R}^n; \nonumber
\end{gather}
for a non-negative decreasing sequence of barrier parameters $\mu$ converging to zero. Under suitable conditions, the optimum value of $(\mathcal P_\mu^\theta)$ converges to an optimal solution of $(\mathcal P^\theta)$ as $\mu$ tends to zero.

Consider the Lagrangian function $\mathcal L_\mu (x, \lambda, \theta)$ for the barrier problem (\ref{def:parametric-barrier-optimization-problem}), that is 
 $$\mathcal L_\mu (x, \lambda, \theta) := f(x, \theta) - \mu \sum_{i=1}^n \ln (x_i) + \lambda^T c(x,\theta),$$ where $\lambda$ is the vector of Lagrangian multipliers.
Then any solution of (\ref{def:parametric-barrier-optimization-problem}) is a solution of the  system of equations:
\begin{eqnarray}\label{eq:primal}
\nabla_{x} \mathcal L_\mu (x, \lambda, \theta)  & = & 0, \label{eq:primal1}\\
c(x, \theta) & = & 0. \nonumber \label{eq:primal2}
\end{eqnarray}

Let $F_\mu(x,\lambda,\theta)$ be the residual of the system of equations (\ref{eq:primal}). 
The system
\[
F_\mu(x,\lambda,\theta)=0
\]
admits a unique solution path in the neighborhood of $x^*$, $\lambda^*$, and $\theta^*$ if the Jacobian $\partial F_\mu(x^*,\lambda^*,\theta^*) / \partial (x,\lambda)$ is non-singular for the appropriate Lagrange multipliers $\lambda^*$.
On the other hand:

\begin{definition}[Singular point of  \cite{Baayen2019}]\label{def:singular-point}
For some fixed parameters $\hat \theta \in [0,1]$ and $\mu > 0$,
let $\hat x$ be a solution of the parametric barrier problem (\ref{def:parametric-barrier-optimization-problem}) and $\hat \lambda$ be the corresponding Lagrange multipliers.  The point $(\hat x,\hat \lambda,\hat \theta)$ is called a \emph{singular point} if the Jacobian $\partial F_\mu(\hat x,\hat \lambda , \hat \theta)/\partial (x, \lambda)$ is singular.
\end{definition}

To characterize the circumstances under which the parametric deformation process works well, i.e., does not admit singular points, \cite{Baayen2019} introduced:

\begin{definition}[Zero-convexity of  \cite{Baayen2019}]\label{def:zero-convex}
We say that the parametric optimization problem (\ref{def:parametric-optimization-problem}) is \emph{zero-convex} if the objective function $x \mapsto f(x,0)$ is a convex function, and the constraints $x \mapsto c(x,0)$ are linear.
\end{definition}

The notion of zero-convexity captures the idea that there should be a unique solution at $\theta=0$, and that it should be possible to find this solution using standard methods.  

\begin{definition}[Path stability of \cite{Baayen2019}] \label{def:path-stable}
We say that the parametric optimization problem (\ref{def:parametric-optimization-problem}) is \emph{path-stable with respect to the interior point method}
if its barrier formulation (\ref{def:parametric-barrier-optimization-problem}) does not admit singular feasible points for any $\mu > 0$ and any $\theta \in [0,1]$.
\end{definition}

In path stability, we seek a way of arriving at a uniquely related local minimum of the fully non-linear problem at $\theta=1$, i.e., 
without bifurcations along the way.
As it turns out, zero-convex and path stable problems can be solved to global optimality using a continuation algorithm of \cite{Baayen2019}, under the additional assumption of:

\begin{definition}[Path connectedness, e.g., \cite{mendelson1962introduction}]
A set $X$ is \emph{path-connected} if for any $x_1, x_2 \in X$, there exists a continuous function $f: [0,1] \to X$
such that $f(0)=x_1$ and $f(1)=x_2$.
\end{definition}

In the following, we extend these notions to a mix of continuous-valued and discrete-valued variables.

\subsection{Artificial Intelligence and Control of Water Systems}\label{sec:water}

In water systems, one typically considers one-dimensional movement of water in shallow channels, as modelled using the so-called Saint-Venant equations \citep{de1871theorie,vreugdenhil2013numerical}, which consist of the momentum equation
\begin{equation}\label{eq:saint-venant}
\frac{\partial Q}{\partial t} + \frac{\partial}{\partial x}\frac{Q^2}{A} + gA\frac{\partial H}{\partial x} + g\frac{Q|Q|}{A R C^2}=0,
\end{equation}
and continuity equation
\begin{equation}\label{eq:mass-balance}
\frac{\partial Q}{\partial x} + \frac{\partial A}{\partial t} = 0,
\end{equation}
where $x$ is the position along the channel,  $Q(x,t)$ is the discharge, $A(x,t)$ is the cross-sectional area, $H(x,t)$ is the water level, $R(x,t)$ is hydraulic radius (wetted perimeter), $C(x)$ is Chézy friction coefficient, and $g$ is the gravitational constant. Notice that the momentum introduces non-linearity, and that the continuity introduces non-linearity only if the cross sections $A$ are non-linear functions of the water level $H$, although this is almost always the case and certainly the case for natural water courses.

Within Artificial Intelligence (AI), there is a long history of monitoring such systems  \citep[e.g.]{dvorak1989model,schnitzler2015sensor}. Within control, much focus \citep[e.g.]{litrico2009modeling} is on feedback control. The optimal control is less studied, but no less important. Earlier approaches to optimal control subject to the Saint-Venant equations rely on local optimization \citep{ackermann2000real,Schwanenberg2015} or linearization \citep{amann2016online}. Global optimization over the Saint-Venant equations is considered by  \cite{Baayen2019}, where it is shown that appropriate discretizations are path-stable.
None of these approaches consider discrete decision variables, though.


\section{Our Approach}

We consider parametric mixed-integer non-linear optimization problems in the following standard form:
\begin{align}
\min_{x,\delta,\theta} f(x,\delta,\theta)  \label{eq:obj1} &  \\
\text{subject to } c(x,\delta,\theta) & = 0, \label{eq:ccons} \\
d(\delta) & = 0, \label{eq:dcons} \\
x & \geq 0,  \\
\delta_i & \in \{0,1\}. \label{eq:bin}
\end{align}
with continuous variables $x$, binary  variables $\delta$, which can be used to model bounded integer variables, and a continuation parameter $\theta$.

\begin{definition}
A \emph{relaxation} of the mixed-integer non-linear optimization problem  (\ref{eq:obj1}--\ref{eq:bin}) is
the continuous optimization problem obtained by omitting the integrality constraints \eqref{eq:bin}, and instead constraining the $\delta_i$ to either point values or subintervals within the closed interval $[0,1]$.
\label{def:relax}
\end{definition}

\begin{definition}
A mixed-integer non-linear optimization problem  (\ref{eq:obj1}--\ref{eq:bin}) is \emph{path-stable} with respect to the interior point method, when its relaxations are zero-convex and path-stable with respect to the interior point method, and their sets of feasible interior points are path-connected. 
\end{definition}

Notice that a necessary condition for path-stability is that the gradients of the constraints (\ref{eq:ccons}--\ref{eq:dcons}) are linearly independent (LICQ) across the feasible region (Proposition 3.6 of \cite{Baayen2019}).

\begin{figure}[bt]
\begin{tikzpicture}
\def\angle{90}%
\pgfmathsetlengthmacro{\xoff}{2cm*cos(\angle)}%
\pgfmathsetlengthmacro{\yoff}{0.55cm*sin(\angle)}%
\draw [thick, fill=gray!10,opacity=.2,text opacity=1] (\xoff,\yoff) circle[x radius=4.5cm, y radius=2cm] ++(2*\xoff,2*\yoff) node{{MI path-stable optimisation}};
\draw [thick, fill=gray!20,opacity=.2,text opacity=1] (0,-0.2) circle[x radius=1.6cm, y radius=1.6cm];
\draw [thick, fill=gray!30,opacity=.2,text opacity=1] (0,0.5) circle[x radius=0.7cm, y radius=0.7cm] node{MISC};
\draw [thick, fill=gray!30,opacity=.2,text opacity=1] (0,-0.95) circle[x radius=0.7cm, y radius=0.7cm] node{MILP};
\node [rectangle, rotate=90] (MIConvexPlace) at (-0.99, -0.1) {MI convex};
\node [rectangle] (ex1place) at (3, 0) {Example 2};
\node [rectangle] (ex1place) at (3, 1) {Example 1}; 
\end{tikzpicture}
\caption{Mixed-integer path-stable optimisation comprises much of mixed-integer (MI) convex optimisation, where certain constraint qualification (LICQ) holds, and certain non-convex problems, where LICQ holds.
See Examples \ref{ex1} and \ref{ex2} for instances that are not mixed-integer convex, but are mixed-integer path-stable.
Notice that mixed-integer linear programs (MILP) need not satisfy LICQ.
Notice that mixed-integer strictly-convex programs satisfying LICQ (MISC) are a proper subset of MI path-stable optimisation. 
}
\label{fig:generalises}
\end{figure}
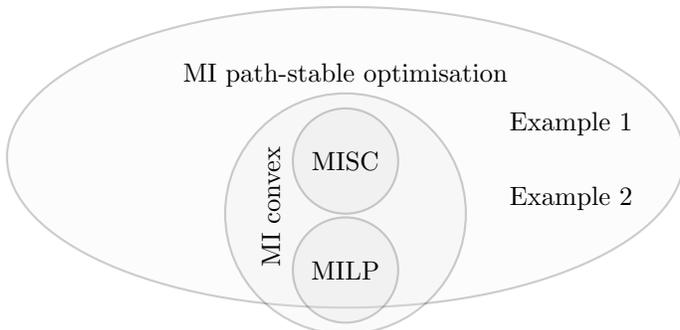

Notice also that a problem need not be convex, in order for it to be path-stable, as suggested in Figure~\ref{fig:generalises}. Let us consider two very simple examples:

\begin{example}[The unit circle]
\begin{align}
\max_{x_1,\delta_1} \; & x_1   \\
\text{subject to } &  (1-\theta)\left(x_1+\delta_1\right) + \theta \left( x_1^2 + \delta_1^2\right) - 1  = 0, \label{eq:circle} \\
& x_1  \geq 0.5, \label{eq:halfplane}  \\
& \delta_1  \in \{0,1\}. \label{eq:specificbin} 
\end{align}

We are interested in the solution of this problem at $\theta=1$.

The intersection of the unit circle defined by \eqref{eq:circle}, the half-plane defined by \eqref{eq:halfplane}, and the integrality constraint \eqref{eq:specificbin} is a single point $x_1^* = 1,\delta_1^* = 0$, which is also the optimum.
Notice that the continuous relaxation is non-convex for $\theta > 0$.

In this example, the bound \eqref{eq:halfplane} is crucial to ensure path-stability.  Without it, the gradient of the constraint \eqref{eq:circle} would vanish at $x_1=0,\delta_1=0$ for $\theta=1$.  The non-vanishing constraint gradient ensures LICQ, and its positive Lagrange multiplier (since we maximize $x_1$) ensures a non-singular Hessian of the Lagrangian of the barrier problem on the tangent space.  These two conditions characterize the path-stability of the relaxations. (Cf. Proposition 3.6 in \cite{Baayen2019}.)
\label{ex1}
\end{example}

\begin{example}[Branches of a parabola]\label{ex:2}
\begin{align}
\min_{x_1,x_2,x_3,\delta_1} & 0.001 x_2 + x_3^2   \\
\text{subject to } & (1-\theta)x_1 + \theta x_1^2 - x_2 - x_3 = 0, \label{eq:parabola} \\
& 2 - M(1-\delta_1)  \leq x_1 \leq -1 + M\delta_1,  \label{eq:bigM} \\
& -2 \leq x_1 \leq 3, \label{eq:xrange}  \\
& -1 \leq x_3 \leq 1, \label{eq:xrange2}  \\
& \delta_1 \in \{0,1\}. \label{eq:specificbin2} 
\end{align}

We are interested in the solution of this problem at $\theta=1$.

The Big-M constraint \eqref{eq:bigM} is readily transformed into two equality constraints using two slack variables $s_1,s_2 \geq 0$.  We do not carry out this transformation here in order not to obscure the geometric meaning of the example.

LICQ is trivially satisfied.  Since we are minimizing $x_2$, the Lagrange multiplier of the constraint \eqref{eq:parabola} is non-negative.  For any tangent space $T(x,\theta)$ of the constraint \eqref{eq:parabola}, any tangent vector $0 \neq y \in T(x,\theta)$ has $y_1 \neq 0$ or $y_3 \neq 0$.  Since the Hessian of the Lagrangian of the barrier problem is a diagonal matrix, and since $\frac{\partial^2 L_f}{\partial x_1^2} > 0$ and $\frac{\partial^2 L_f}{\partial x_3^2} > 0$, the Hessian is non-singular on any $T(x,\theta)$ of the constraint \eqref{eq:parabola}.

The Big-M constraint \eqref{eq:bigM} switches between inequality constraints $x_1 \leq -1$ if $\delta_1=0$, and $x_1 \geq 2$ if $\delta_1=1$.  The constraint \eqref{eq:parabola} relates the variables $x_1$ and $x_2$ in a parabolic fashion, with a ``penalty'' variable $x_3$.  

The penalty variable $x_3$ is introduced in order to ensure that replacing $x_3$ into the objective function results in a function that is not (quasi)convex.  To see this, note that its sublevel sets form sections of bands around the parabola (Figure \ref{fig:sublevel}). This example illustrates that there exist problems that are path-stable, but that do not become (quasi)convex once the constraints are eliminated.

\begin{figure}[!t]
\centering
 \includegraphics[scale=0.5]{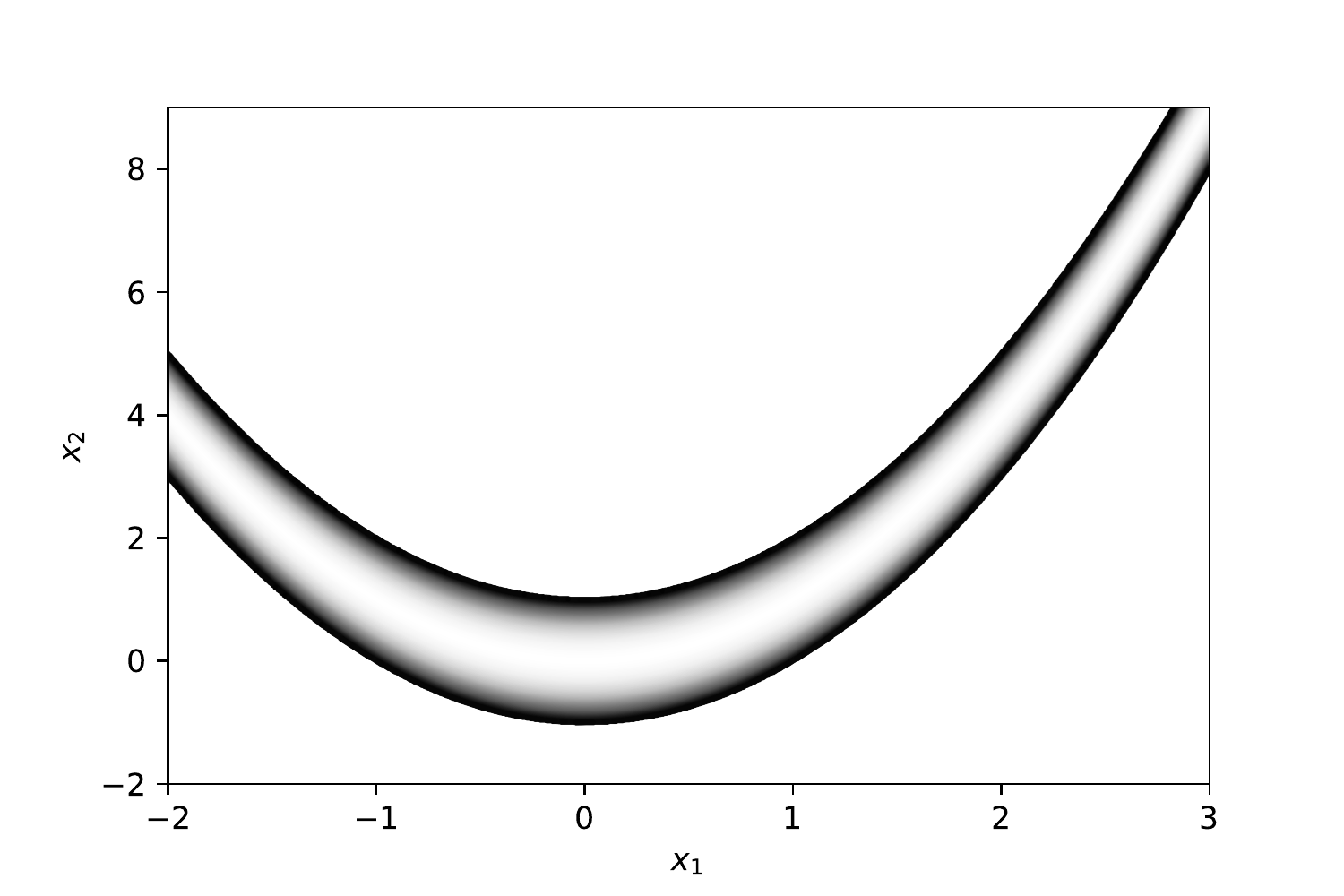}
\caption{Contour plot of the objective function of Example \ref{ex:2}, with the variable $x_3$ eliminated using the constraint \eqref{eq:parabola}.}
\label{fig:sublevel} 
\end{figure}

The objective essentially instructs the solver to stay on the parabola ($x_3=0$) and then to pick the smallest $x_2$ coordinate from two disconnected branches of a parabola.  The optimum lies on the left branch, since the constraint \eqref{eq:xrange} allows for smaller (in absolute value) $x_1$ and hence smaller $x_2$ on the left.
\label{ex2}
\end{example}

The above two examples are simple and would also have allowed us to prove directly that the Hessian of the Lagrangian of the barrier problem is positive definite on the tangent space of the constraint manifold at $\theta=1$.  For more complex problems, however, proving whether a Hessian is positive definite is often significantly harder than proving whether it is non-singular. 
See Proposition \ref{prop:river} for an example.

\subsection{A Decomposition}

We decompose the path-stable problem (\ref{eq:obj1}--\ref{eq:bin}) into two problems, a fully integral problem, and a fully continuous problem:
\begin{enumerate}
\item[INT] An integer-programming problem for the variables $\delta$ constrained by the constraint \eqref{eq:dcons} without continuous decision variables; and
\item[CNT] A continuous relaxation of Definition \ref{def:relax}, which is an optimization problem, where every integer decision variable is replaced either by a point value, or by a bounded continuous decision variable. That is: $\delta$ is constrained to lie in a hypercube instead of a discrete set.
\end{enumerate}

The proposed algorithm uses a branch-and-bound or branch-and-cut engine, such as \textsc{IBM ILOG CPLEX}, to perform implicit enumeration of the solutions of the integer-programming problem (INT).  At any point where the branch-and-cut engine needs an objective function value, it consults a continuation method (CNT), such as \textsc{KISTERS RTO}.  An overview is presented in Figure \ref{fig:my_label}.  The continuation solver is called when deciding whether to mark a node as the \emph{incumbent} (i.e., currently best) solution, and when making a branch pruning decision.

\emph{Incumbent nodes}.  The incumbent node is the node with the lowest objective value that has been visited by the algorithm thus far.  Since the objective value is obtained using CNT, a new node is marked as incumbent only if its CNT objective value is lower than the CNT objective value of the current incumbent.

\emph{Pruning decisions}.  A branch of nodes may be pruned if the CNT objective value corresponding to its partial solution is higher than the CNT objective value of the incumbent.  Due to branch pruning, not all nodes need to be visited. 

\emph{Solution of the initial system}. For a general non-convex problem, finding a feasible starting point of the homotopy continuation is hard.  
In mixed-integer path-stable optimisation, we 
start from a convex relaxation at $\theta=0$.
Continuation allows us to transport the positive Hessian eigenvalues of a (strictly) convex starting problem along with the parameter $\theta$ to the non-convex problem at $\theta=1$ \citep{Baayen2019}.

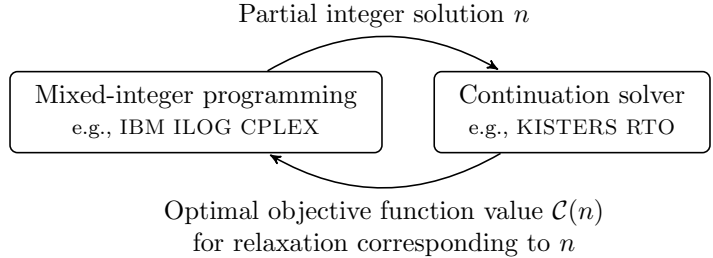
\begin{figure}[t]
    \centering
    
\begin{tikzpicture}[->,>=stealth',shorten >=1pt,auto,node distance=5.0cm,
                    semithick]

  \node[fill=white,draw=black,text=black,rounded corners=0.1cm] (miqp) {\begin{tabular}{c}Mixed-integer programming \\ \footnotesize e.g., \textsc{IBM ILOG CPLEX}\end{tabular}};
  \node[fill=white,draw=black,text=black,rounded corners=0.1cm] (homotopy)  [right of=miqp] {\begin{tabular}{c}Continuation solver \\ \footnotesize e.g., \textsc{KISTERS RTO}\end{tabular}};
  
  \path (miqp) edge [bend left] node {Partial integer solution $n$} (homotopy);
  \path (homotopy) edge [bend left] node {\begin{tabular}{c}Optimal objective function value $\mathcal{C}(n)$ \\ for relaxation corresponding to $n$\end{tabular}} (miqp);

\end{tikzpicture}
\caption{Information flows between the mixed-integer and continuation solvers.}
    \label{fig:my_label}
\end{figure}

The decomposition supports any type of constraint, as long as the problem (\ref{eq:obj1}--\ref{eq:bin}) is path-stable. 

\section{An Algorithm and its Convergence Analysis}

We will now present a bare-bones branch-and-bound algorithm \citep{floudas1995nonlinear}.  The branch-and-bound algorithm traverses the tree, wherein nodes represent partial assignments of values to integer decision variables. In other nodes, values of some of the integer decision variables are fixed, while other values are not.
In particular, leaves of the tree correspond to assignments to all integer decision variables.
While visiting a node $n$ of the tree, we invoke the homotopy-continuation solver on a relaxation (cf. Definition~\ref{def:relax}), where one subset of integer variables are assigned their fixed values, and the remainder of integer variables are treated as bounded continuous variables. $\mathcal{C}(n)$ denotes the objective-function value corresponding to the solution of the relaxation. 
See Algorithm \ref{alg:bnb} for the pseudo-code.


The convergence of Algorithm \ref{alg:bnb} is analyzed in Theorem \ref{thm}:

\begin{algorithm}[t]
\SetAlgoLined
\caption{Bare-bones branch-and-bound algorithm.}
\KwIn{A mixed-integer path-stable optimisation problem (\ref{eq:obj1}--\ref{eq:bin}).}
\KwResult{Global optimiser $\delta^*$ and the global optimum.}
 Initialise incumbent objective-function value to $B:=\infty$\;
 Initialise incumbent solution in terms of $\delta$ to an empty solution\;
 Initialize a queue to hold a partial integer solution $n$ with none of the integer variables of the problem assigned\;
 \While{queue is non-empty}{
  Take an arbitrary node $n$ off the queue\;
  \eIf{$n$ has all $\delta$ variables fixed}{ 
    \eIf{$\mathcal{C}(n) < B$}{
    Update the incumbent: Set $B:=\mathcal{C}(n)$ and $\delta:=n$ \;
     }{
     Discard $n$ as a solution candidate\;
     }
  }{
    \eIf{$\mathcal{C}(n) > B$}{
      Prune the branch\;
    }{
      Branch on an arbitrary element of 
     $\delta$ that is not fixed in $n$ and produce new nodes $n_i$\;
      Store the new nodes $n_i$ in the queue\;
    }
  }
 }
 \textbf{return} $\delta, B$\;
 \label{alg:bnb}
\end{algorithm}

\begin{theorem}\label{thm}
The branch-and-bound Algorithm \ref{alg:bnb} computes the global optimum of a path-stable mixed-integer non-linear optimization problem  (\ref{eq:obj1}--\ref{eq:bin}).
\end{theorem}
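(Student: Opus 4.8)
## Proof Proposal

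The plan is to establish two things: (i) correctness of the per-node subroutine, namely that $\mathcal{C}(n)$ is the true global optimum of the relaxation attached to node $n$; and (ii) correctness of the branch-and-bound bookkeeping, namely that the combination of exact node values, a valid pruning rule, and exhaustive enumeration of integer leaves returns the global optimum of the full mixed-integer problem (\ref{eq:obj1}--\ref{eq:bin}). Step (i) is where the path-stability hypothesis does all the work; step (ii) is the standard branch-and-bound argument adapted to this decomposition.

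First I would handle (i). By Definition \ref{def:relax} the relaxation at node $n$ is a continuous optimization problem of the form (\ref{def:parametric-optimization-problem}) in the variables $(x,\delta)$ with $\delta$ restricted to a box; by the definition of path-stability for the mixed-integer problem, every such relaxation is zero-convex, path-stable with respect to the interior point method, and has a path-connected set of feasible interior points. Hence the continuation algorithm of \cite{Baayen2019} applies verbatim: starting from the unique solution of the convex problem at $\theta=0$, the homotopy path $\theta \mapsto (x(\theta),\lambda(\theta))$ exists, is unique, and has no singular points for any $\mu>0$ and any $\theta\in[0,1]$, so tracing it to $\theta=1$ and driving $\mu\to 0$ yields a global optimum of the $\theta=1$ relaxation. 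I would cite the relevant convergence result of \cite{Baayen2019} for this and note that path-connectedness of the feasible interior is exactly the extra hypothesis their theorem requires. Thus $\mathcal{C}(n)$ is well-defined and equals the global minimum of the node-$n$ relaxation.

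Next, (ii). I would argue by the usual three invariants maintained over the while-loop. (a) \emph{Relaxation monotonicity}: if node $n'$ is a descendant of node $n$ in the branch tree, its feasible set is contained in that of $n$ (branching only fixes a previously free $\delta_i$ to $0$ or $1$, shrinking the box), so $\mathcal{C}(n') \ge \mathcal{C}(n)$; in particular the value at any integer leaf below $n$ is at least $\mathcal{C}(n)$, which makes the pruning test $\mathcal{C}(n) > B$ valid — no leaf pruned this way can beat the incumbent. (b) \emph{Incumbent validity}: $B$ is only ever set to $\mathcal{C}(n)$ for a node $n$ with all $\delta$ fixed, and for such a node the relaxation box is a single point, so $\mathcal{C}(n)$ is the objective value of a genuine feasible point of (\ref{eq:obj1}--\ref{eq:bin}); hence $B$ is always an upper bound on the global optimum, attained by a recorded feasible $\delta$. (c) \emph{Completeness}: every integer-feasible assignment either labels a leaf that is eventually dequeued and evaluated, or lies in a subtree rooted at some pruned node $n$ with $\mathcal{C}(n) > B$ at prune time, and by (a) every leaf in that subtree has value $\ge \mathcal{C}(n) > B \ge$ the value that will be returned — so no optimal leaf is ever discarded. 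Since the integer variables $\delta$ are finitely many binaries, the tree is finite, the queue empties, and on termination $B$ equals the minimum over all integer-feasible leaves of $\mathcal{C}(\text{leaf})$, which by (i) and the definition of the relaxation at a leaf is precisely $\min$ of (\ref{eq:obj1}) over the feasible region (\ref{eq:ccons}--\ref{eq:bin}) at $\theta=1$.

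The main obstacle is the seam between the two halves: making rigorous that $\mathcal{C}(n)$ returned by the continuation solver is a \emph{global} optimum of the node relaxation, not merely a local one reachable from $\theta=0$. This is not something the branch-and-bound skeleton can supply — it rests entirely on invoking the global-optimality guarantee of \cite{Baayen2019} for zero-convex, path-stable, path-connected problems, and on checking that each relaxation in Definition \ref{def:relax} genuinely inherits all three properties from the mixed-integer problem (the box restriction on $\delta$ must not destroy zero-convexity at $\theta=0$, path-stability, or path-connectedness of the feasible interior). Once that inheritance is in place — which the definition of a path-stable mixed-integer problem is precisely engineered to give — the remainder is the routine branch-and-bound correctness argument sketched above, and I would keep that part brief.
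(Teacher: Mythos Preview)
Your proposal is correct and follows essentially the same approach as the paper: both invoke the global-optimality guarantee of \cite{Baayen2019} (the paper cites it as Theorem~3.8) to ensure $\mathcal{C}(n)$ is the exact optimum of the node relaxation, and then use the standard relaxation-monotonicity argument to certify that pruning never discards an optimal leaf. The paper compresses your step~(ii) into a short proof by contradiction on the pruning rule alone, whereas you spell out all three branch-and-bound invariants; the extra care is fine but not a different route.
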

\begin{proof}
It suffices to show that no branches are pruned that may contain a global optimum. For a contradiction, assume that a branch $n$ is pruned that contains a global optimum $o$. 
Because the problem is path-stable, the relaxation is solved to global optimality by Theorem 3.8 of \cite{Baayen2019}. Hence, any global optimum contained in the branch must obtain an objective value that is no lower than that of the relaxation: $C(o) \geq C(n)$.  Per the pruning condition in Algorithm \ref{alg:bnb}, we have that
\[
C(o) \geq C(n) > B.
\]
This contradicts the presumed global optimality of $o$.  Hence, Algorithm \ref{alg:bnb} finds a global optimum of problem (\ref{eq:obj1}--\ref{eq:bin}).
\end{proof}

Notice that the algorithm runs in time exponential in the dimension of $\delta$ for the worst possible order of nodes taken from the queue, but that the average-case complexity may be lower, due to pruning.

\section{A Case Study: Cascaded River with Adjustable Weirs}

We consider a parametric river system with a varying number of cascaded reaches, some of which are terminated with an adjustable weir. 
The reaches are modelled using the Saint-Venant equations \citep{de1871theorie}, discretized using a semi-implicit scheme on a staggered grid as in \citep{casulli1998conservative,Baayen2019}. %
The grid is illustrated in Figure~\ref{fig:example-grid};
interpolation of control values between control time steps is linear.
An upstream inflow boundary condition is provided with a fixed time series.
Hydraulic parameters and further initial conditions are summarized in Table~\ref{table:example-specs}.
The model starts from steady state: the initial flow rate is uniform and the water level decreases linearly along the length of every reach.

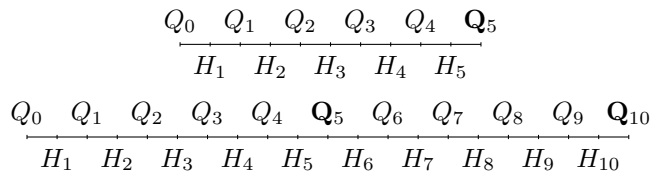
\begin{figure}[b]
\centering
\begin{tikzpicture}[scale=0.8]
\draw (0,0) -- (5.0,0);
\draw (0 cm, 1pt) -- (0 cm, -1pt) node[anchor=south] { \begin{tabular}{c} $Q_0$ \end{tabular} };
\foreach \x in {1,2,...,4}
    \draw (\x cm,1 pt) -- (\x cm,-1 pt) node[anchor=south] { \begin{tabular}{c} $Q_{\x}$ \end{tabular} };
\draw (5 cm, 1pt) -- (5 cm, -1pt) node[anchor=south] { \begin{tabular}{c} $\mathbf Q_5$ \end{tabular} };
\foreach \x in {1,2,...,5}
    \draw (\x cm - 0.5 cm,1 pt) -- (\x cm - 0.5 cm,-1 pt) node[anchor=north] {$H_{\x}$};
\end{tikzpicture}
\begin{tikzpicture}[scale=0.8]
\draw (0,0) -- (10.0,0);
\draw (0 cm, 1pt) -- (0 cm, -1pt) node[anchor=south] { \begin{tabular}{c} $Q_0$ \end{tabular} };
\foreach \x in {1,2,...,4}
    \draw (\x cm,1 pt) -- (\x cm,-1 pt) node[anchor=south] { \begin{tabular}{c} $Q_{\x}$ \end{tabular} };
\draw (5 cm, 1pt) -- (5 cm, -1pt) node[anchor=south] { \begin{tabular}{c} $\mathbf Q_5$ \end{tabular} };
\foreach \x in {6,7,...,9}
    \draw (\x cm,1 pt) -- (\x cm,-1 pt) node[anchor=south] { \begin{tabular}{c} $Q_{\x}$ \end{tabular} };
\draw (10 cm, 1pt) -- (10 cm, -1pt) node[anchor=south] { \begin{tabular}{c} $\mathbf Q_{10}$ \end{tabular} };
\foreach \x in {1,2,...,10}
    \draw (\x cm - 0.5 cm,1 pt) -- (\x cm - 0.5 cm,-1 pt) node[anchor=north] {$H_{\x}$};
\end{tikzpicture}
\caption{Staggered grid for the example problem with $1$ and $2$ adjustable weirs, respectively. Adjustable weir flows are located at $Q_5$ and $Q_{10}$ (indicated in bold).}
\label{fig:example-grid}
\end{figure}

\begin{table*}[t]
  \caption{Parameters for the example problem, loosely based on \cite{Baayen2019}.}
  \label{table:example-specs}
  \centering
  \begin{tabularx}{\linewidth}{ p{1cm} p{2.5cm} X}
  \toprule
     Param. & Value & Description \\
  \midrule
      $T$ & $24$ h & Optimization horizon \\
      $\Delta t^h$ & $10$ min & Hydraulic time step \\
      $\Delta t^c$ & $4$ h & Control time step \\
      $H^b_{i}$ & $( -4.90, -4.92$, $\ldots$, $-5.10)$ m & Bottom level, repeating for every reach \\
      $l$ & $10 000$ m & Length per reach \\
      $A_i(H_i)$ & $50\cdot(H-H^b_i)$ m$^2$ & Channel cross section function \\
      $P_i(H_i)$ & $50+2\cdot(H-H^b_i)$ m & Channel wetted perimeter function \\
      $C_{i}$ & $\left( 40, 40, \ldots, 40 \right)$ m$^{0.5}$/s & Chézy friction coefficient \\
      $\overline{H}$ & $0.0$ m & Nominal level in linear model for entire reach \\
      $\overline{Q}$ & $100$ m$^3$/s & Nominal discharge in linear model for entire reach \\
      $H_i(t_0)$ & $( 0.000, -0.025$, $\ldots$, $-0.222)$ m & Initial water levels at $H$ nodes, repeating for every reach \\
      $Q_i(t_0)$ & $\left( 100, 100, \ldots, 100 \right)$ m$^3$/s & Initial discharge at $Q$ nodes, repeating for every reach \\
      $Q_0$ & $100 \to 300$ m$^3$/s & Inflow hydrograph \\
      $\varepsilon$ & $10^{-12}$ & Absolute value approximation smoothness parameter \\
      $K$ & $10$ & Convective acceleration steepness factor \\
    \bottomrule
  \end{tabularx}
  
\end{table*}

Our optimization objective is to keep the water level at the $H$ nodes at $0$ m above datum:
\begin{align}
\min \sum_i \sum_{j} H_i(t_j)^2 \label{eq:obj}
\end{align}
subject to the binary adjustable weir flow constraint
\begin{align}
Q_k \in \{100, 200\} \label{eq:binary}
\end{align}
for all adjustable weir flow variables $Q_k$.  Adjustable weir flows, which are chosen at every control time step, are interpolated linearly onto the hydraulic time steps.

This problem is similar to the numerical example discussed in \cite{Baayen2019}, but differs in that we consider binary gates that can be either fully open or fully closed, leading to the binary adjustable weir flow constraint \eqref{eq:binary}. Still:

\begin{proposition}
For an optimal control problem of 
a parametric river system with a varying number of cascaded reaches modelled using Saint-Venant equations (\ref{eq:saint-venant}--\ref{eq:mass-balance}), with some reaches being terminated with a binary weir (\ref{eq:binary}), there exist a discretisation in the form of a mixed-integer path-stable optimisation problem.
\label{prop:river}
\end{proposition}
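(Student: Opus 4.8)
The plan is to produce an explicit discretisation and then verify, for its relaxations, the three requirements in the definition of mixed-integer path-stability — zero-convexity, path-stability with respect to the interior point method, and path-connectedness of the feasible interior points — by reducing each relaxation to an instance of the continuous river optimal-control problem analysed in \cite{Baayen2019}. \emph{The discretisation.} Following \cite{casulli1998conservative,Baayen2019}, we discretise each reach on the staggered grid of Figure~\ref{fig:example-grid} with a semi-implicit scheme, taking the discharges and water levels at all grid points and time steps as the continuous variables (after the customary shift so that the wetted depths $H_i-H^b_i$ together with slack variables supply the $x\ge 0$ block of the standard form). We introduce the homotopy parameter $\theta$ so that at $\theta=0$ the convective-acceleration term, the friction term $gQ|Q|/(ARC^2)$, and the cross-section and wetted-perimeter functions are replaced by their linearisations about the nominal state $(\overline H,\overline Q)$ of Table~\ref{table:example-specs}, whereas at $\theta=1$ the fully non-linear discretised \eqref{eq:saint-venant}--\eqref{eq:mass-balance} are recovered; $|Q|$ is replaced throughout by the smooth surrogate $\sabs(\cdot)$ with parameter $\varepsilon$ and the sign in the convective term by a smooth surrogate of steepness $K$, so that $f$ and $c$ are twice differentiable. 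The objective is the convex quadratic $f=\sum_i\sum_j H_i(t_j)^2$ of \eqref{eq:obj}, and each binary weir flow \eqref{eq:binary} is written $Q_k=100+100\,\delta_k$ with $\delta_k\in\{0,1\}$, this affine relation being folded into $c$ while $d(\delta)=0$ is vacuous. This puts the problem in the form (\ref{eq:obj1}--\ref{eq:bin}), uniformly in the number of reaches and of weirs.

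\emph{Relaxations.} A relaxation in the sense of Definition~\ref{def:relax} pins some $\delta_k$ to $0$ or $1$ and confines the rest to $[0,1]$, so each weir discharge $Q_k$ is either pinned to $100$ or $200$ or confined to $[100,200]$. A pinned weir discharge is an internal prescribed-flow boundary condition that splits the cascade into independent, well-posed discretised Saint-Venant sub-problems, and a weir discharge confined to $[100,200]$ is an ordinary bounded control shared by the two adjacent sub-problems; in either case the relaxation is exactly a continuous river optimal-control problem of the type treated in \cite{Baayen2019}, with bounded controls and a polyhedral control box. In particular, at $\theta=0$ the dynamics are linear, $f$ is convex quadratic, and the bounds $100\le Q_k\le 200$ are linear — and the smoothed friction and convective terms do not even appear at $\theta=0$ — so every relaxation is zero-convex in the sense of Definition~\ref{def:zero-convex}.

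\emph{Path-stability and path-connectedness.} It remains to check, for each relaxation and every $\mu>0$, $\theta\in[0,1]$, the hypotheses behind Proposition~3.6 of \cite{Baayen2019}. LICQ holds because the discretised momentum equation is strictly monotone in the new-time discharges — the discrete time-derivative contributes $1/\Delta t^h>0$ and the smoothed friction and convective terms contribute non-negatively — hence solvable for those discharges, so the discretised \eqref{eq:saint-venant}--\eqref{eq:mass-balance}, the initial conditions and the affine weir relations have linearly independent gradients, and adjoining the simple bounds on the $Q_k$ and the smoothing-slack equations does not destroy this. On the tangent space of the constraint manifold the new-time discharges are determined by the water levels and the controls through the linearised momentum equation, so the barrier Lagrangian Hessian restricts to that of a reduced problem whose continuity-type operator is non-singular and to which $f$ contributes positive curvature in the water levels; hence it is non-singular and there are no singular feasible points. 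Path-connectedness of the feasible interior set follows as in \cite{Baayen2019}: this set is the graph, over the path-connected set of admissible controls, of the continuous control-to-state map defined by the semi-implicit recursion. All three requirements of the definition of mixed-integer path-stability then hold, which proves the proposition.

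\emph{The main obstacle.} The only non-routine step is the hydraulic one: verifying that a weir — whether pinned as an internal flow boundary condition or left as a single bounded variable coupling two sub-domains — preserves the non-singular-Jacobian structure of the semi-implicit discretisation on which LICQ and the barrier-Hessian condition rest. Once one observes that a pinned weir decomposes the cascade into standard well-posed sub-problems and that a free weir flow merely adds one bounded shared variable, the remaining estimates are those of \cite{Baayen2019}; keeping $f$ and $c$ twice differentiable (hence the $\varepsilon$- and $K$-smoothings) and noting that none of the bounds depend on the number of reaches completes the argument.
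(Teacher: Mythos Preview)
Your proposal is correct and follows essentially the same route as the paper: reduce every relaxation to an instance of the continuous cascaded-river problem treated in \cite{Baayen2019} and inherit zero-convexity, path-stability, and path-connectedness from there. The paper's own proof is much terser --- it simply invokes Proposition~4.6 of \cite{Baayen2019} and checks that its named hypotheses (BND, ICO, HBC, QBC, OBJ) hold, with the only non-trivial one being BND, handled by observing that a branched-on $Q_k$ becomes a fixed boundary splitting the cascade --- whereas you unpack the mechanics (monotonicity of the semi-implicit momentum step in the new-time discharges for LICQ, the reduced barrier Hessian for non-singularity, the control-to-state map for path-connectedness) that underlie that proposition. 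Your treatment of the free-weir case as a bounded control shared by adjacent sub-problems is the correct complement to the paper's pinned-weir discussion; both are implicitly covered by the ``up to bounds'' clause in the paper's first sentence.
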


\begin{proof}
First, notice that the continuous-valued relaxation coincides with the continuous-valued problem of Proposition 4.6 of \cite{Baayen2019}, up to bounds on the individual variables obtained by branching on (\ref{eq:binary}).
Therefore we can apply the  staggered-grid semi-implicitly discretisation of (\ref{eq:saint-venant}--\ref{eq:mass-balance}), as suggested in equations (4.5--4.6) of \cite{Baayen2019}.
Next, consider the assumptions of Proposition 4.6 of \cite{Baayen2019}.
The assumption that none of the interior hydraulic variables are bounded (BND) 
is satisfied, because $Q_k$ once it has been branched on, becomes a fixed boundary for the up- and downstream reaches.
That is: fixing value of a scalar $Q_k$ to a numerical constant eliminates the weir flow  from the continuous-valued problem, which is split into two at that binary weir $k$.
The remaining assumptions (ICO, HBC, QBC, and OBJ) are satisfied trivially.
The result hence follows from Proposition 4.6 of \cite{Baayen2019}. 
\end{proof}


\begin{figure}[!tp]
\centering
 \includegraphics[scale=0.8]{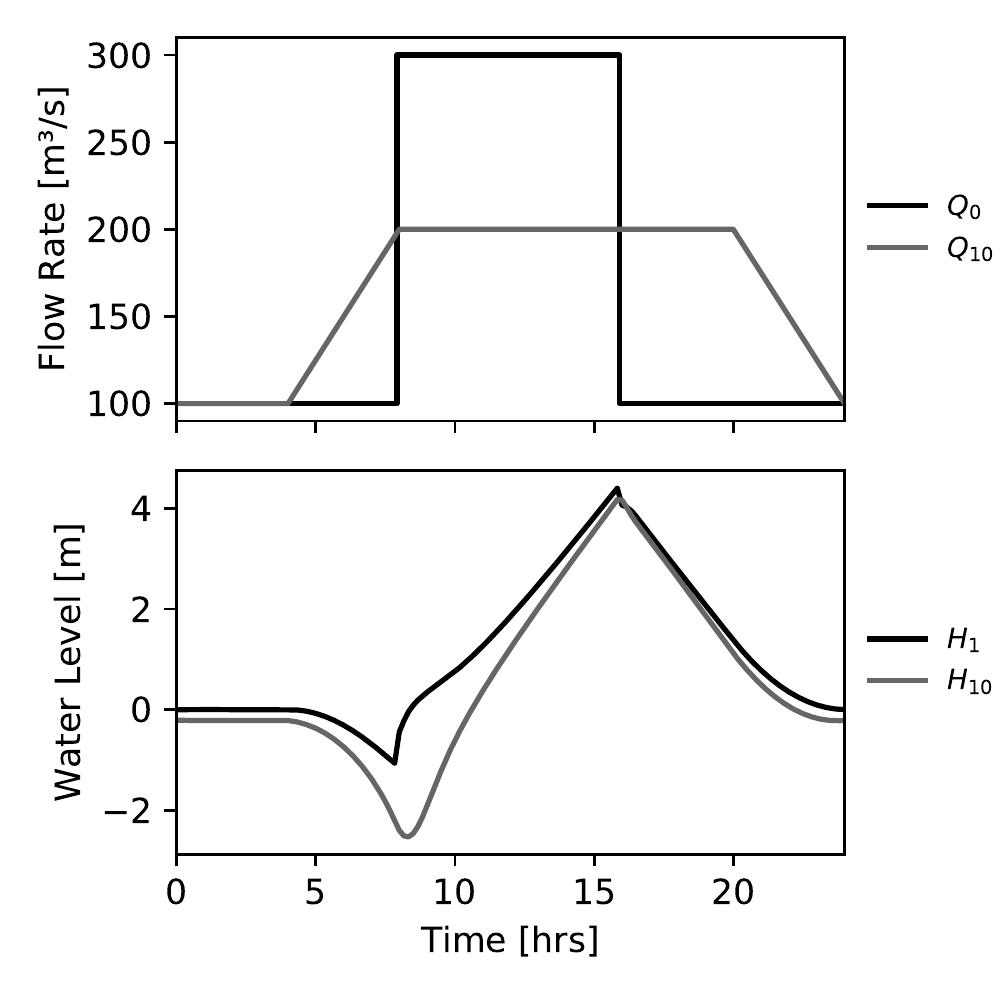}
\caption{Solution of the example optimization problem for a single weir.}
\label{fig:results_1} 
\end{figure}

\begin{figure}[!tp]
\centering
 \includegraphics[scale=0.8]{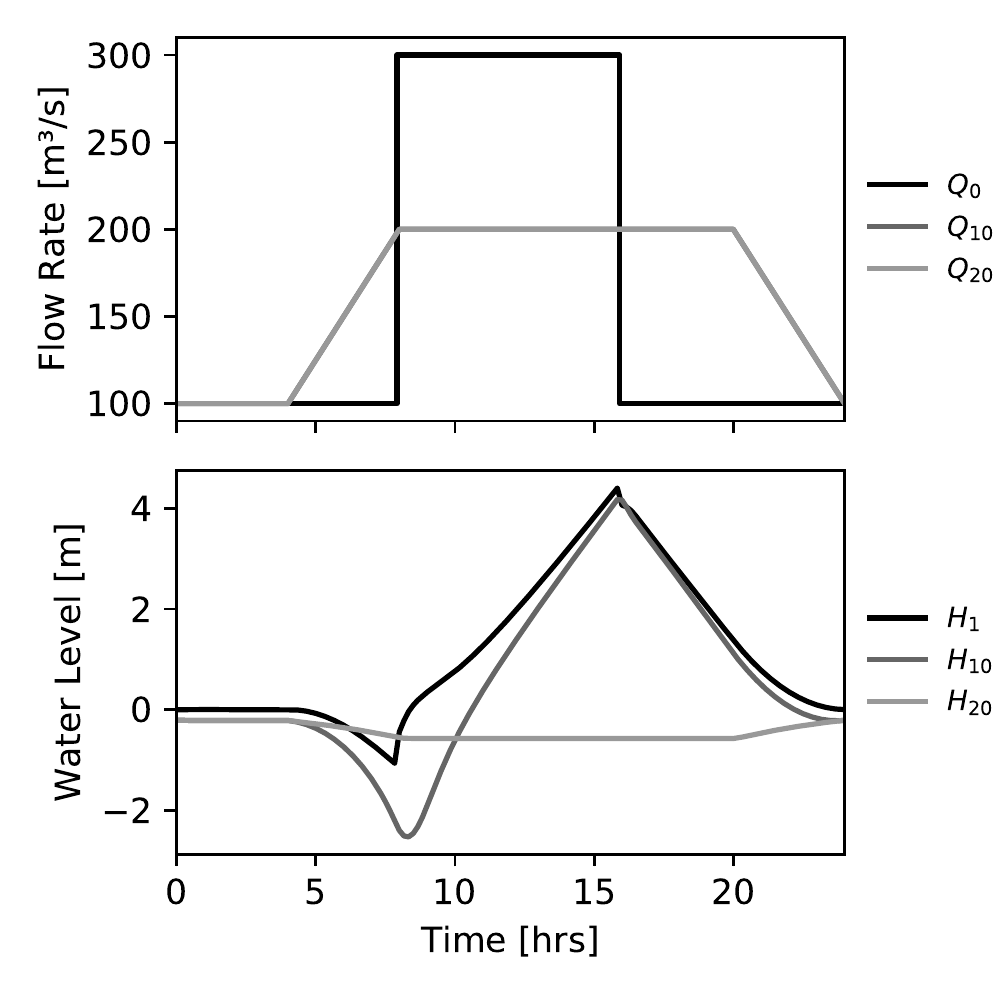}
\caption{Solution of the example optimization problem for two weirs. At the 4 hourly control time resolution, the optimal solution is to pass the outflow of the upstream reach right through the downstream reach.}
\label{fig:results_2} 
\end{figure}

The solution to the optimization problem is plotted in Figures~\ref{fig:results_1} and \ref{fig:results_2} for one and two adjustable weirs, respectively.
By releasing water in anticipation of the inflow using the decision variables, the optimization is able to reduce water-level fluctuations and keep the water levels close to the target level.

This optimization problem was implemented in Python using the \textsc{CasADi} package \citep{andersson2019casadi} for algorithmic differentiation
and \textsc{IBM ILOG CPLEX} \citep{cplexv1291} as the the branch-and-bound framework.  
On a MacBook Pro with 2.9 GHz Intel Core i5 CPU, the one and two weir cases take approximately $42$ s and $136$ s to solve using 4 CPU cores, respectively.
The complete source code is available on-line\footnote{\url{https://github.com/jbaayen/homotopy-example/tree/cplex-coupling}}.

\section{Concluding Remarks}

In this paper, we have demonstrated the viability of global mixed-integer optimization subject to non-linear dynamics under certain novel sufficient conditions. 
A decomposition was presented that, subject to the set of sufficient conditions, ensures that a branch-and-bound or branch-and-cut-type algorithm converges to a global optimum of the mixed-integer problem.

The approach was illustrated with examples of a flow in a cascaded channel with adjustable weirs.  
A variety of extensions thereof in scheduling hydro-power and flood control are of considerable commercial and societal value.

For perhaps the most prominent example in flood control, consider the automated drainage pump control by the Rijnland water authority in the Netherlands.
The homotopy-continuation method of \cite{Baayen2019} is already in use 
\citep{Bosbo1,Baayen2019-2} for the polders of Amsterdam Schiphol airport and the surrounding area. At the moment, only the continuous dynamics are considered, 
but it would be natural to consider a pump flow set point as a semi-continuous variable, which in the definition of \cite{cplexv1291,ghaddar2015lagrangian} 
can either be 0 or any value between its semi-continuous lower bound and its upper bound.  Such semi-continuous variables may give rise to mixed-integer path-stable optimisation problems.

\section*{Acknowledgments}
Jakub would like to acknowledge most pleasant 
discussion with Bradley Eck,
Vyacheslav Kungurtsev, and Sean McKenna.

\bibliographystyle{named}

\newpage

\bibliography{main}

\end{document}